\theoremstyle{plain}
  \newtheorem{theorem}{Theorem}
  \newtheorem{proposition}[theorem]{Proposition}
  \newtheorem{lemma}[theorem]{Lemma}
\theoremstyle{definition}
  \newtheorem{definition}[theorem]{Definition}
\theoremstyle{remark}
\theoremstyle{Problem}
\newcommand{\field}[1]{\mathbb{#1}}
\newcommand{\newword}[1]{\textbf{\emph{#1}}}
\newcommand{\LE}{%
 \hbox{%
  \vbox{\hrule width 0.35em height 0.04 em}%
  \vbox{\offinterlineskip%
   \hbox{\kern -0.02em\vrule height 0.65em width 0.04em \hspace{0.1em}}%
  }%
 }%
}
\def\I{\mathcal{I}}
\def\J{\mathcal{J}}
\def\F{\mathcal{F}}
\def\M{\mathcal{M}}
\def\R{\field{R}}
\title{Positroids and Schubert Matroids}
\author{Suho Oh}
\address{Department of Mathematics, Massachusetts Institute of Technology, 
        77 Massachusetts Ave, Cambridge, MA 02139}      
\thanks{The author was supported in part by Samsung Scholarship.}
\begin{document}

\begin{abstract}
Postnikov gave a combinatorial description of the cells in a totally-nonnegative Grassmannian. These cells correspond to a special class of matroids called positroids. We prove his conjecture that a positroid is exactly an intersection of cyclically shifted Schubert matroids. This leads to a combinatorial description of positroids that is easily computable. 
\end{abstract}

\maketitle

\section{Introduction}

A \newword{positroid} is a matroid that can be represented by a $k \times n$ matrix with nonnegative maximal minors. The classical theory of total positivity concerns matrices in which all minors are nonnegative, and this subject was extended by Lusztig \cite{L}.

Lusztig introduced the totally nonnegative variety $G \geq 0$ in an arbitrary reductive group $G$ and the totally nonnegative part $(G/P)_{\geq 0}$ of a real flag variety $(G/P)$. He also conjectured that $(G/P)_{\geq 0}$ is made up of cells, and this was proved by Rietsch \cite{R}. 

In this paper, we will restrict our attention to $(Gr_{kn})_{\geq 0}$, the \newword{totally nonnegative Grassmannian}. Then there is a more refined decomposition using matroid strata. Postnikov obtained a relationship between $(Gr_{kn})_{\geq 0}$ and certain planar bicolored graphs, producing a combinatorially explicit cell decomposition of $(Gr_{kn})_{\geq 0}$  \cite{P}. The cells correspond to positroids.

One of the results of \cite{P} is that each cell is an intersection of $(Gr_{kn})_{\geq 0}$ and Schubert cells corresponding to a combinatorial object called the Grassmann necklace. This result implies that each positroid is included in an intersection of cyclically shifted Schubert matroids. We extend this result: each positroid is exactly an intersection of certain cyclically shifted Schubert matroids.

A more detailed formulation of the main result follows.
Let $[n]:=\{1,\cdots,n\}$ and let ${[n] \choose k}$ be the collection of all $k$-element subsets in $[n]$. Fix some $t \in [n]$. We define the ordering $<_t$ on $[n]$ by the total order $t <_t t+1 <_t \cdots <_t n <_t 1 \cdots <_t t-1$. For $I,J \in {[n] \choose k}$, where
$$I=\{i_1, \cdots, i_k \}, i_1 <_t i_2 \cdots <_t i_k$$ and
$$J=\{j_1, \cdots, j_k \}, j_1 <_t j_2 \cdots <_t j_k,$$
we set 
$$I \leq_t J \text{ if and only if } i_1 \leq_t j_1, \cdots, i_k \leq_t j_k.$$ For each $I \in {[n] \choose k}$ and $w \in S_n$, we define the \newword{cyclically shifted Schubert matroid} as
$$ SM^{t}_I := \{J \in {[n] \choose k} | I \leq_t J \}. $$
We will show that a matroid $\M \subseteq {[n] \choose k}$ is a positroid if and only if it can be written as $SM^{1}_{I_1} \cap SM^{2}_{I_2} \cap \cdots \cap SM^{n}_{I_n}$ for a Grassmann necklace $\I = (I_1,\cdots,I_n)$, $I_1, \cdots, I_n \in {[n] \choose k}$. Our proof is purely combinatorial. 

The paper is organized as follows. In section 2, we go over the basics of matroids and the totally nonnegative Grassmannian. In section 3, we review $\LE$-diagrams and $\LE$-graphs. In section 4, we give the proof of our main result. In section 5, we introduce the upper Grassmann necklace. In section 6, we view lattice path matroids as special cases of positroids. In section 7, we define flag positroids.


\medskip

\textbf{Acknowledgment} I would like to thank my adviser, Alexander Postnikov for introducing me to the field and the problem. I would also like to thank Allen Knutson and Lauren Williams for useful discussions. I would also like to thank Anna de Mier and Tom Lenagan for corrections.

\section{Preliminaries and the Main Result}

We would like to guide the readers unfamiliar with basics in this section to \cite{F} and \cite{P}. 

An element in the Grassmannian $Gr_{kn}$ can be understood as a collection of $n$ vectors $v_1, \cdots, v_n \in \R^k$ spanning the space $\R^k$ modulo the simultaneous action of $GL_k$ on the vectors. The vectors $v_i$ are the columns of a $k \times n$-matrix $A$ that represents the element of the Grassmannian. Then an element $V \in Gr_{kn}$ represented by $A$ gives the matroid $\M_V$ whose bases are the $k$-subsets $I \subset [n]$ such that $\Delta_I (A) \not = 0$. Here, $\Delta_I(A)$ denotes the determinant of $A_I$, the $k$ by $k$ submatrix of $A$ with the column set $I$.

Then $Gr_{kn}$ has a subdivision into \newword{matroid strata} $S_{\M}$ labeled by some matroids $\M$:
$$ S_{\M} := \{V \in Gr_{kn} | \M_{V} = \M \}.$$
The elements of the stratum $S_{\M}$ are represented by matrices $A$ such that $\Delta_I (A) \not = 0$ if and only if $I \in \M$.





Let us define the totally nonnegative Grassmannian and its cells.

\begin{definition}[\cite{P}, Definition 3.1] The \newword{totally nonnegative Grassmannian} $Gr^{tnn}_{kn} \subset Gr_{kn}$ is the quotient $Gr^{tnn}_{kn} = GL^{+}_{k} \backslash Mat^{tnn}_{kn}$, where $Mat^{tnn}_{kn}$ is the set of real $k\times n$-matrices $A$ of rank $k$ with nonnegative maximal minors $\Delta_I (A) \geq 0$ and $GL^{+}_k$ is the group of $k \times k$-matrices with positive determinant.
\end{definition}

\begin{definition}[\cite{P}, Definition 3.2] \newword{Totally nonnegative Grassmann cells} $S^{tnn}_{\M}$ in $Gr^{tnn}_{kn}$ are defined as $S^{tnn}_{\M} := S_{\M} \cap Gr^{tnn}_{kn}$. $\M$ is called a \newword{positroid} if the cell $S^{tnn}_{\M}$ is nonempty.
\end{definition}

Note that from above definitions, we get
$$ S^{tnn}_{\M} = \{ GL^{+}_k \bullet A \in Gr^{tnn}_{kn} | \Delta_I (A) >0 \textbf{ for } I \in \M, \Delta_I (A) = 0 \textbf{ for } I \not \in \M  \}. $$

In \cite{P}, Postnikov showed a bijection between each cell and a combinatorial object called the Grassmann necklace.

\begin{definition}[\cite{P}, Definition 16.1]
A \newword{Grassmann necklace} is a sequence $\I = (I_1, \cdots, I_n)$ of subsets $I_r \subseteq [n]$ such that:
\begin{itemize}
\item if $i \in I_i$ then $I_{i+1}= (I_i \setminus \{i \}) \cup \{j\}$ for some $j \in [n]$,
\item if $i \not \in I_i$ then $I_{i+1} = I_i$.  
\end{itemize}
The indices are taken modulo $n$. In particular, we have $|I_1| = \cdots = |I_n|$.
\end{definition}

An example of a Grassmann necklace would be $I_1 = \{1,2,4\}, I_2 = \{2,4,5\},I_3 = \{3,4,5\},I_4 = \{4,5,2\},I_5 = \{5,1,2\}$. Two of the results in \cite{P} are the following: 

\begin{lemma}[\cite{P}, Lemma 16.3]
\label{lem:P}
For a matroid $\M \subseteq {[n] \choose k}$ of rank $k$ on the set $[n]$, let $\I_{\M} = (I_1,\cdots,I_n)$ be the sequence of subsets such that $I_i$ is the minimal member of $\M$ with respect to $\leq_i$. Then $\I_{\M}$ is a Grassmann necklace.
\end{lemma}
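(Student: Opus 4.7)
The $\leq_t$-minimum basis $I_t$ of $\M$ coincides with the output of the matroid greedy algorithm under the linear order $<_t$, so a standard exchange argument will drive the whole proof. The decisive observation is that the orders $<_t$ and $<_{t+1}$ agree on $[n]\setminus\{t\}$: the only difference is that $t$ is the minimum of $<_t$ but the maximum of $<_{t+1}$. Suppose first $t \notin I_t$. Then $t$ must be a loop of $\M$ (otherwise the greedy, seeing $t$ first under $<_t$, would include it), and hence $t$ is in no basis at all; in particular $t \notin I_{t+1}$, and the two greedy runs agree on $[n]\setminus\{t\}$, giving $I_{t+1} = I_t$, as required.

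Now suppose $t \in I_t$ and write $I_t = \{t, a_2, \ldots, a_k\}$. I claim $\{a_2, \ldots, a_k\} \subseteq I_{t+1}$; granting this and $|I_{t+1}| = k$, we obtain $I_{t+1} = (I_t \setminus \{t\}) \cup \{j\}$ for some $j \in [n]$ (possibly $j = t$, corresponding to $I_{t+1} = I_t$, which will occur precisely when $t$ is a coloop). To prove the claim, assume for contradiction that some $a_l \in I_t \setminus I_{t+1}$ exists. Apply the symmetric (bijective) basis exchange axiom to the bases $I_t, I_{t+1}$ with the element $a_l$: we obtain $y \in I_{t+1}\setminus I_t$ such that both $J'' := (I_t \setminus \{a_l\}) \cup \{y\}$ and $J' := (I_{t+1}\setminus \{y\})\cup \{a_l\}$ are bases of $\M$. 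If $y <_t a_l$, then $J''$ has one more element strictly below $a_l$ in $<_t$ than $I_t$ does, so the $l$-th entry of the sorted $J''$ is $<_t a_l$, forcing $J''$ to violate $I_t \leq_t J''$ at position $l$ and contradicting the minimality of $I_t$. Hence $y >_t a_l$, and since $y, a_l \neq t$ this gives $a_l <_{t+1} y$. But then $J'$, obtained from $I_{t+1}$ by replacing $y$ with the smaller element $a_l$ under $<_{t+1}$, has one more element strictly below $y$ in $<_{t+1}$ than $I_{t+1}$ does, so by the same count argument at the position of $y$ we get $J'$ violating $I_{t+1}\leq_{t+1} J'$, contradicting the minimality of $I_{t+1}$.

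\textbf{Main obstacle.} The delicate step is arranging a single $y$ that witnesses both contradictions simultaneously, which is exactly what the symmetric basis exchange axiom supplies; without bijective exchange one would need two separate choices of exchange partner and the position-count argument would not close. Everything else reduces to careful bookkeeping of how the cyclic rotation of the total order affects the position of $t$ and the count of elements below a fixed threshold in the sorted basis sequences.
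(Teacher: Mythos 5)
The paper cites this lemma directly from Postnikov's preprint (Lemma 16.3) and does not reproduce a proof, so there is no argument in the source to compare against; I can only assess correctness.

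Your proof is correct. The reduction to greedy is right: the Gale-minimum basis of a matroid with respect to a linear order is exactly the greedy output, and $<_t$ and $<_{t+1}$ differ only in that $t$ is moved from the bottom to the top of the order. The loop case ($t\notin I_t$) is handled cleanly. In the main case, your use of the symmetric (bijective) basis exchange theorem of Brylawski/Greene/Woodall to obtain a single $y$ serving both $J''=(I_t\setminus\{a_l\})\cup\{y\}$ and $J'=(I_{t+1}\setminus\{y\})\cup\{a_l\}$ is exactly what makes the dichotomy $y<_t a_l$ versus $y>_t a_l$ close; with only the basic exchange axiom the two witnesses need not coincide, as you observe. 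The position-counting step is sound: replacing $a_l$ by something strictly $<_t a_l$ pushes the $l$-th sorted entry strictly down, violating $I_t\leq_t J''$, and the mirrored argument for $J'$ under $<_{t+1}$ is valid because $y,a_l\neq t$ so the two orders agree on them. One small presentational point: you should state explicitly at the outset that the greedy basis is the $\leq_t$-\emph{minimum} (lower bound for all bases), not merely a $\leq_t$-minimal element, since that is what the two contradictions invoke; you do gesture at this in the opening sentence but it is worth making the invocation of Gale's theorem explicit.
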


\begin{theorem}[\cite{P}, Theorem 17.2] 
\label{thm:P}
Let $S^{tnn}_{\M}$ be a nonnegative Grassmann cell, and let $\I_{\M} = (I_1, \cdots, I_n)$ be the Grassmann necklace corresponding to $\M$. Then 
$$S^{tnn}_{\M} = \bigcap_{i=1}^{n} \Omega_{I_i}^{i} \cap {Gr}^{tnn}_{kn},$$
where $\Omega_{I_i}^{i}$ is the cyclically shifted Schubert cell, which is the set of elements $V \in Gr_{kn}$ such that $I_i$ is the lexicographically minimal base of $M_V$ with respect to ordering $<_i$ on $[n]$.
\end{theorem}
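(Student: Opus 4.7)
The statement expresses a set equality, so I would prove two inclusions separately.

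The inclusion $S^{tnn}_\M \subseteq \bigcap_i \Omega^i_{I_i} \cap Gr^{tnn}_{kn}$ is immediate from unpacking definitions. If $V \in S^{tnn}_\M$ then $\M_V = \M$, so for each $i$ the $<_i$-lexicographically minimal base of $\M_V$ coincides with the $<_i$-minimal element of $\M$, which is $I_i$ by definition of $\I_\M$ in Lemma \ref{lem:P}. Hence $V$ lies in each $\Omega^i_{I_i}$, and by construction $V \in Gr^{tnn}_{kn}$.

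The reverse inclusion is the substantive direction: given $V \in Gr^{tnn}_{kn}$ with $<_i$-minimal base of $\M_V$ equal to $I_i$ for each $i$, I would conclude $\M_V = \M$. This reduces to showing that the Grassmann necklace alone determines the positroid. My plan is to use the $\LE$-diagram apparatus promised in Section 3: associate to the Grassmann necklace $\I$ a canonical $\LE$-diagram $L_\I$ and an explicit totally nonnegative matrix $A(L_\I)$ whose Plücker coordinates are expressed, via the $\LE$-graph together with the Lindström--Gessel--Viennot lemma, as weighted sums over non-intersecting path families with strictly positive parameters. Such a sum is nonzero precisely when at least one valid path family exists, giving a purely combinatorial description of the positroid of $A(L_\I)$ from the data of $\I$. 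One then verifies that the Grassmann necklace of the matroid of $A(L_\I)$ is indeed $\I$.

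It remains to show that any totally nonnegative $V$ with Grassmann necklace $\I$ realizes the same matroid as $A(L_\I)$. I would attack this by a cell-counting and connectedness argument: the parameters of $A(L_\I)$ sweep out a cell, distinct $\LE$-diagrams produce matrices with distinct Grassmann necklaces (so distinct cells), and these cells cover $Gr^{tnn}_{kn}$ up to lower-dimensional boundary behavior controlled by the Grassmann necklace. The main obstacle is precisely this surjectivity/uniqueness step — the passage from a totally nonnegative point with prescribed cyclic minimal bases to a canonical $\LE$-parametrized representative. This is the delicate combinatorial/geometric content that distinguishes the totally nonnegative setting from the general matroid setting, where Grassmann necklaces are far from determining the matroid.
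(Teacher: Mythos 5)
This theorem is imported wholesale from Postnikov \cite[Theorem 17.2]{P}; the present paper states it without proof and uses it as a black box, so there is no internal proof to compare your attempt against.

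On the merits: your forward inclusion is correct and immediate, and your reduction of the reverse inclusion to ``a Grassmann necklace determines its positroid among totally nonnegative points'' is also correct. The gap is in how you propose to close that reduction. Constructing $A(L_\I)$ and reading off its matroid from non-intersecting path families in the $\LE$-graph via Lindstr\"om--Gessel--Viennot is sound (Postnikov's Theorem 6.5). But the remaining step --- that any totally nonnegative $V$ with necklace $\I$ has the same matroid as $A(L_\I)$ --- is exactly where ``cell-counting and connectedness'' restates the goal rather than proving it. You would need (i) surjectivity: every point of $Gr^{tnn}_{kn}$ lies in the image of some $\LE$-parametrization, and (ii) disjointness: distinct $\LE$-diagrams give disjoint images. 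Neither is a counting or connectedness argument. Postnikov establishes (i) by explicitly inverting the boundary measurement map, a nontrivial induction over plabic-graph moves, and (ii) by observing that the matroid, hence the $\LE$-diagram, is recoverable from the vanishing pattern of Pl\"ucker coordinates at any point of the cell. Your sketch names the obstacle but supplies neither ingredient, so as written it does not constitute a proof of the substantive direction.
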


These results imply that bases of a positroid are included in an intersection of cyclically shifted Schubert matroids. But it does not imply that they are equal. Postnikov therefore conjectured that each positroid is exactly the intersection of cyclically shifted Schubert matroids. This is what we are going to prove in our paper:

\begin{theorem} 
\label{thm:mainO}
$\M$ is a positroid if and only if for some Grassmann necklace $(I_1, \cdots, I_n)$,
$$ \M = \bigcap_{i=1}^{n} SM^i_{I_i}. $$
In other words, $\M$ is a positroid if and only if the following holds : $H \in \M$ if and only if $H \geq_t I_t$ for all $t\in [n]$.
\end{theorem}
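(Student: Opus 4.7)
Both directions of Theorem \ref{thm:mainO} reduce to a single identity: for every Grassmann necklace $\I = (I_1,\ldots,I_n)$, one has $\bigcap_{i=1}^n SM^i_{I_i} = \M_\I$, where $\M_\I$ is the unique positroid whose Grassmann necklace is $\I$ (unique by Postnikov's bijection between positroid cells and Grassmann necklaces). The inclusion $\M_\I \subseteq \bigcap SM^i_{I_i}$ is essentially immediate: Lemma \ref{lem:P} and Theorem \ref{thm:P} say that each $I_i$ is the $\leq_i$-minimal base of $\M_\I$, so every base $H$ of $\M_\I$ satisfies $H \geq_i I_i$ directly from the definition of $\leq_i$. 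Thus all the substance of the theorem lies in the reverse inclusion: any $H$ with $H \geq_t I_t$ for every $t$ must actually be a base of $\M_\I$.

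To establish this reverse inclusion, I would make essential use of the \LE-diagram and \LE-graph formalism to be introduced in section 3. Associated with $\I$ are a \LE-diagram $D_\I$ and a planar bicolored graph $G_\I$ whose matroid of source sets of non-crossing flows to a fixed set of sinks is exactly $\M_\I$. The problem thus translates to a purely combinatorial one on the graph: given $H$ with $H \geq_t I_t$ for all $t$, construct a non-crossing system of paths in $G_\I$ whose source set is $H$.

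I would attack this by induction, peeling a single element off $[n]$ at each step (correspondingly, removing a boundary row or column of $D_\I$). The natural choice of peeled index is one at which the Grassmann necklace axiom forces a specific transition from $I_i$ to $I_{i+1}$; this produces a smaller \LE-diagram with its own Grassmann necklace $\I'$ and reduced subset $H'$. The crux is to show that the hypothesis $H' \geq_t I'_t$ propagates to the reduced data, after which the inductively produced flow in $G_{\I'}$ lifts to one in $G_\I$ by appending a single boundary edge. The main obstacle is precisely this propagation: the $n$ cyclic inequalities $H \geq_t I_t$ are formally independent, yet they must jointly survive the reduction and jointly certify a single coherent non-crossing flow. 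Arranging the peeling so that each lifted path respects the \LE-condition without crossing the inductively built paths is where the genuine combinatorial work of the argument must take place.
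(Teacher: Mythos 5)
Your overall architecture matches the paper: the inclusion $\M_\I \subseteq \bigcap_{i} SM^i_{I_i}$ is immediate from Lemma~\ref{lem:P}, and the substance is in the reverse inclusion, which you rightly propose to establish by realizing any $H$ with $H \geq_t I_t$ for all $t$ as a vertex-disjoint family of paths in the $\LE$-graph of $\M_\I$. But the induction you outline---peeling an element off $[n]$, i.e.\ deleting a boundary row or column of the $\LE$-diagram---is not carried through, and the place it stalls is exactly the place you flag: after deletion, all $n$ cyclic orders $<_t$ and all $n$ necklace entries $I_t$ change at once, so the $n$ hypotheses $H \geq_t I_t$ do not obviously descend to the reduced $\I'$ and $H'$. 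You correctly identify this propagation as the crux but give no lemma or argument to resolve it, so the proposal has a genuine gap precisely there.

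The paper's induction sidesteps the problem by keeping $n$ fixed and inducting instead on the number of \emph{empty boxes} in the $\LE$-diagram. The base case is the full diagram, a single Schubert matroid. For the step, one chooses an empty corner box (via the boundary strip and middle path construction) whose filling changes \emph{exactly one} necklace entry, $I_\alpha \mapsto I'_\alpha$ with $|I_\alpha \setminus I'_\alpha| = 1$, while $I'_i = I_i$ for all $i \neq \alpha$. Then the induction hypothesis $\M_{\I'} = \bigcap_i SM^i_{I'_i}$ combines with the single new constraint $H \geq_\alpha I_\alpha$, and the only thing left to check is $\M_{\I'} \setminus \M_\I \subset SM^\alpha_{I'_\alpha} \setminus SM^\alpha_{I_\alpha}$, which the paper does by perturbing paths of a VD-family through the chain of dots rooted at the newly added dot. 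Because only one necklace index moves per step, there is no simultaneous propagation of $n$ cyclic inequalities to control. If you want to pursue the deletion-based route, you would first need a precise lemma describing the Grassmann necklace of a positroid after deletion or contraction and showing how each of the $n$ inequalities descends; that is where the unstated work would have to go.
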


\section{Le-diagrams and Le-graphs}

In \cite{P}, Postnikov showed a bijection between positroids and combinatorial objects called $\LE$-diagrams. 

\begin{definition}
Fix a partition $\lambda$ that fits inside the rectangle $(n-k)^k$. The boundary of the Young diagram of $\lambda$ gives the lattice path of length $n$ from the upper right corner to the lower left corner of the rectangle $(n-k)^k$. Let's denote this path as the \newword{boundary path}. Label each edge in the path by $1,\cdots,n$ as we go downwards and to the left. Define $I(\lambda)$ as the set of labels of $k$ vertical steps in the path. 

Each column and row corresponds to exactly one labeled edge. Let's index the columns and rows with those labels. We will say that a box is at $(i,j)$ if it is on row $i$ and column $j$. A \newword{filling} of $\lambda$ is a diagram of $\lambda$ where each box is either empty or filled with a dot. 
\end{definition}

Given a filling $L$ of shape $\lambda$, we define the sets $box(L),filled(L)$ as:
$$box(L) := \{ (i,j) | \text{ there is a box at } (i,j) \text{ in } L \},$$
$$filled(L) := \{ (i,j) | \text{ there is a box filled with a dot at } (i,j) \text{ in } L \}.$$


\begin{definition}[\cite{P}, Definition 6.1]
 For a partition $\lambda$, let us define a \newword{$\LE$-diagram} $L$ of shape $\lambda$ as a filling of boxes of the Young diagram of shape $\lambda$ such that, for any three boxes indexed $(i,j),(i',j),(i,j')$, where $i'<i$ and $j'>j$, if boxes on position $(i',j)$ and $(i,j')$ are filled, then the box on $(i,j)$ is also filled. This property is called the $\LE$-property. We will say that a $\LE$-diagram is \newword{full} if every box is filled. 
\end{definition}

\begin{figure}
	\centering
	  \includegraphics[width=0.3\textwidth]{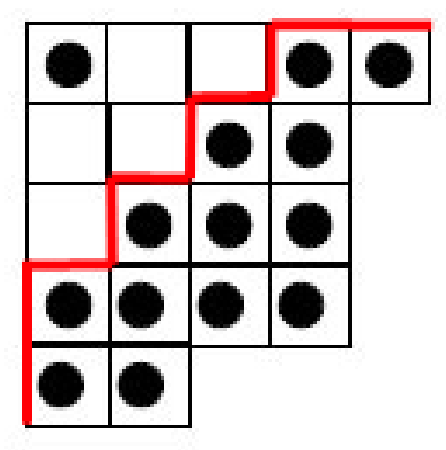}
	  \includegraphics[width=0.3\textwidth]{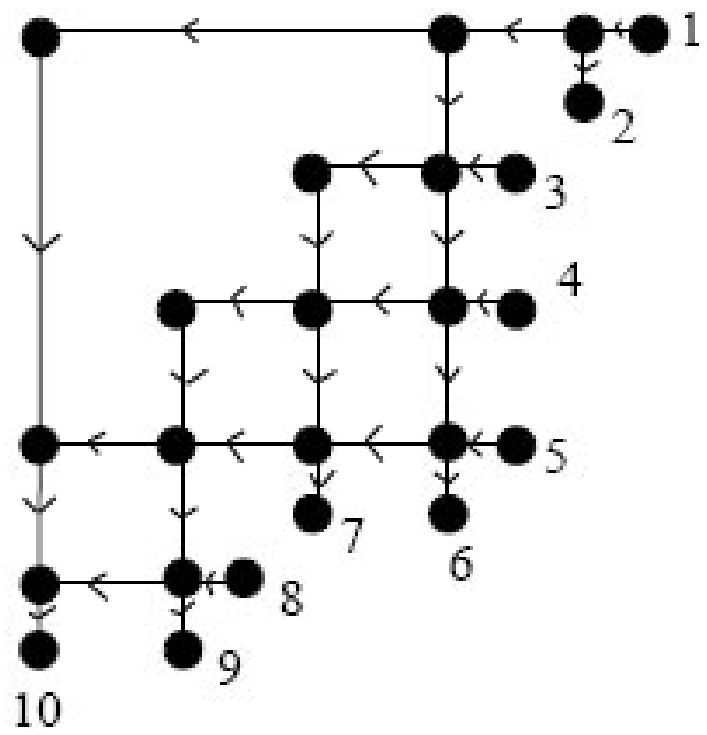}
	\caption{Example of a $\LE$-diagram and a $\LE$-graph}
	\label{fig:leh}
\end{figure}

Fix a $\LE$-diagram $L$ of shape $\lambda$. For each $(i,j) \in box(L)$, we define the \newword{NW-region} of it as:
$$NW_{(i,j)} := \{(i',j')|i'<i, j'>j\}.$$
 There is a unique dot $(i',j')$ that minimizes $i-i'$ and $j'-j$ at the same time, due to the $\LE$-property. We will say that $(i',j')$ covers $(i,j)$ and write this as $(i',j') \triangleleft (i,j)$.
 

\begin{definition}[\cite{P}]
A \newword{$\LE$-graph} is obtained from a $\LE$-diagram in the following way. Place a vertex at the middle of each step in the boundary path of the diagram and mark these vertices by $1,2,\cdots,n$. We will call these vertices the \newword{boundary vertices}. Now for each dot inside the $\LE$-diagram, draw a horizontal line to its right, and vertical line to its bottom until it reaches the boundary of the diagram. Then orient all vertical edges downward and horizontal edges to the left.
\end{definition}

$\LE$-graphs were also used to study TP-basis of positroids in \cite{Talaska}. The source set of the $\LE$-graph is given by $I(\lambda)$ and the sink set is given by $[n] \setminus I(\lambda)$. 

\begin{definition}
A \newword{path} in a $\LE$-graph is a directed path that starts at some boundary vertex and ends at some boundary vertex. Given a path $p$, we denote its starting point and end point by $p^s$ and $p^e$. A \newword{VD-family} is a family of paths where no pair of paths share a vertex. 
\end{definition}

A dot at $(i,j)$ is a \newword{NW-corner} of a path $p$, if $p$ changes direction at $(i,j)$. For each $(i,j) \in filled(L)$, there is a path that starts at a boundary vertex $i$, ends at a boundary vertex $j$ and has the dot at $(i,j)$ as a NW-corner. We call such path as a \newword{hook path of $(i,j)$}.


Given a VD-family of  paths $\{p_1,\cdots,p_t\}$, we say that this family represents $J = I(\lambda) \setminus \{p_1^s,\cdots,p_t^s\} \cup \{p_1^e,\cdots,p_t^e\}$. Empty family is also a VD-family. The following proposition follows as a corollary from (\cite{P}, Theorem 6.5).

\begin{proposition}[\cite{P}]
Fix a positroid $\M_{L}$ that corresponds to a $\LE$-diagram $L$. Then $J \in \M_{L}$ if and only if $J$ is represented by a VD-family in the $\LE$-graph of $L$. 
\end{proposition}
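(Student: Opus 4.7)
The plan is to deduce the proposition directly from Postnikov's Theorem 6.5 in \cite{P}, which parameterizes the cell $S^{tnn}_{\M_L}$ by assigning positive real weights to the edges of the $\LE$-graph. That theorem provides an explicit $k \times n$ matrix $A(L)$ representing a point of $S^{tnn}_{\M_L}$ together with a Lindström--Gessel--Viennot (LGV) style expansion of each maximal minor as a signless sum over VD-families; once this is in hand the proposition is essentially a one-line positivity argument in each direction.

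First I would fix positive weights $w_e > 0$ on every edge of the $\LE$-graph and consider the matrix $A(L)$ produced by Theorem 6.5. By that theorem, $A(L)$ represents a point of $S^{tnn}_{\M_L}$, so its nonvanishing maximal minors are indexed exactly by the bases of $\M_L$. Second, I would read off from the same theorem the LGV-type formula
$$\Delta_J(A(L)) \;=\; \sum_{\F} \prod_{e \in \F} w_e,$$
where the sum is over all VD-families $\F$ in the $\LE$-graph representing $J$ in the sense defined above. The sign-freeness of this formula comes from the planarity of the $\LE$-graph together with the fact that its sources $I(\lambda)$ and sinks $[n]\setminus I(\lambda)$ sit on complementary arcs of the boundary of the Young diagram in a compatible cyclic order: any VD-family must connect the source subset it uses to the corresponding sink subset via the unique non-crossing matching, so in the standard signed LGV expansion only terms with sign $+1$ survive.

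With these two ingredients, the proof of the equivalence is immediate. If $J$ is represented by some VD-family $\F$, then the corresponding monomial $\prod_{e \in \F} w_e$ is strictly positive; hence $\Delta_J(A(L)) > 0$, and since $A(L) \in S^{tnn}_{\M_L}$ this forces $J \in \M_L$. Conversely, if no VD-family represents $J$, then the right-hand side of the displayed formula is the empty sum and $\Delta_J(A(L)) = 0$, whence $J \not\in \M_L$.

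The main obstacle — already packaged into Theorem 6.5 of \cite{P} — is justifying the signless LGV expansion, i.e., confirming that the planar embedding of the $\LE$-graph really does force every VD-family into the identity matching between its sources and its sinks. Once that geometric fact is granted, what remains is only the positivity observation above, and the corollary follows with no further combinatorial input.
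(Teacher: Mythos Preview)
Your proposal is correct and follows exactly the route the paper indicates: the paper does not give a proof at all, merely stating that the proposition ``follows as a corollary from (\cite{P}, Theorem~6.5).'' You have simply unpacked that corollary by spelling out the LGV/positivity mechanism behind Postnikov's boundary-measurement formula, which is precisely the intended argument.
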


Each $\LE$-diagram corresponds to a positroid, and hence a Grassmann necklace. Given a $\LE$-diagram $L$, let's try to find out its corresponding Grassmann necklace $\I = (I_1,\cdots,I_n)$ directly from the diagram. It is obvious that $I_1 = I(\lambda)$. 

For each $(x,y) \in box(L)$, we can get a maximal chain $(x_t,y_t) \triangleleft \cdots \triangleleft (x_1,y_1)$ such that $(x_1,y_1)$ is the unique dot in $\{(i,j)| i \leq x, j \geq y\}$ that minimizes $x-i$ and $j-y$ at the same time. We will call this the \newword{chain rooted at $(x,y)$}. Then the collection of hook paths at $(x_r,y_r)$ for $1 \leq r \leq t$ is a VD-family.  So we get $J_{(x,y)} := I_{\lambda} \setminus \{x_1,\cdots,x_t\} \cup \{y_1,\cdots,y_t\} \in \M_L$. In Figure~\ref{fig:leh}, chain rooted at $(5,9)$ is given by $(1,10) \triangleleft (5,9)$. A chain rooted at $(3,9)$ is given by $(1,10)$.

\begin{proposition}
Fix $j \in [n] \setminus \{1\}$ and a $\LE$-diagram $L$ of shape $\lambda$ and let $\I = (I_1,\cdots,I_n)$ be the Grassmann necklace of $\M_{L}$. If $j \not \in I(\lambda)$, let $(x,y)$ be the box adjacent to a path labeled $j$ in the boundary path of $\lambda$. If $j \in I(\lambda), j \not = 1$, let $(x,y)$ be the box right above such box. Then $I_j = J_{(x,y)}$.
\end{proposition}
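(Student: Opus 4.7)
The plan is to verify the two defining properties of $I_j$: that $J_{(x,y)} \in \M_L$ and that $J_{(x,y)}$ is $\leq_j$-minimum in $\M_L$; by Lemma~\ref{lem:P} this forces $I_j = J_{(x,y)}$.

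For membership, let the chain rooted at $(x,y)$ be $(x_t, y_t) \triangleleft \cdots \triangleleft (x_1, y_1)$, which by construction satisfies $x_1 > \cdots > x_t$ and $y_1 < \cdots < y_t$. The hook path at $(x_r, y_r)$ runs along row $x_r$ and column $y_r$, so different hook paths occupy pairwise distinct rows in their horizontal segments and pairwise distinct columns in their vertical segments. A short case check also rules out any collision at an interior vertex of a hook with another hook. Hence the collection is a VD-family, and it represents $J_{(x,y)}$ by construction, giving $J_{(x,y)} \in \M_L$ via the preceding proposition.

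For $\leq_j$-minimality, the approach is an element-by-element comparison. Sort any basis $H \in \M_L$ by $<_j$ and argue that its $r$-th entry is $\geq_j$ the $r$-th entry of $J_{(x,y)}$ sorted the same way. In both cases of the proposition—$j \notin I(\lambda)$, with $(x,y)$ the bottom box of column $j$, and $j \in I(\lambda)$ with $j \ne 1$, where $(x,y)$ is the box directly above the rightmost box of row $j$—the choice of $(x,y)$ is exactly what makes the chain record the greediest possible swaps in the $<_j$ order: $y_1$ is the $<_j$-smallest column label reachable by any VD-family, $x_1$ is the unique source that must be traded away to reach it, and iterating, each $y_r$ is the $<_j$-smallest sink still reachable after the earlier swaps while each $x_r$ is the source forced out.

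The main obstacle will be making this greedy interpretation rigorous. The plan is to argue by contradiction: suppose $H \in \M_L$ is realized by a VD-family whose sorted-in-$<_j$ list differs from that of $J_{(x,y)}$ in the $r$-th slot with a strictly $<_j$-smaller entry. Tracing the offending path in the VD-family for $H$, I expect to locate a dot strictly NW of $(x_r, y_r)$ which either violates the $\LE$-property uniqueness that defined $(x_r, y_r)$ as the unique NW-closest dot, or else forces a vertex collision with another path in the VD-family. Some bookkeeping is required because $<_j$ wraps around $[n]$, but the key input in both cases of the proposition is the same minimality condition that defines the chain rooted at $(x,y)$.
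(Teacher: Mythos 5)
Your decomposition — show $J_{(x,y)} \in \M_L$, show it is $\leq_j$-minimal, conclude by Lemma~\ref{lem:P} — is logically sound, and the membership half matches what the paper establishes (the paper asserts the hook paths of the chain form a VD-family just before the proposition). But the minimality half runs the argument in the opposite direction from the paper, and that direction is substantially harder. The paper exploits Lemma~\ref{lem:P} from the start: $I_j$ already exists as the $\leq_j$-minimum and therefore is represented by \emph{some} VD-family $\F$. It then constrains $\F$ step by step — first observing that every $p \in \F$ must satisfy $p^s < j \leq p^e$ (else dropping $p$ would give something $<_j I_j$), hence every path crosses the region $\{(i,j') : i \leq x,\ j' \geq y\}$, hence $\F$ must contain the hook at $(x_1,y_1)$, then at $(x_2,y_2)$, and so on. The only thing being compared is $I_j$ against a short explicit list of candidates built from the chain, never against an arbitrary basis.

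Your plan instead needs $J_{(x,y)} \leq_j H$ for \emph{every} $H \in \M_L$, and here the sketch has a genuine gap. You propose to sort $H$ and $J_{(x,y)}$ by $<_j$, find the first slot where they differ, and ``trace the offending path'' to a dot strictly NW of some $(x_r,y_r)$. But the slot-$r$ entry of $J_{(x,y)}$ need not be $y_r$ (it may be an element of $I(\lambda)\setminus\{x_1,\dots,x_t\}$ inherited with no path at all), the discrepancy in a sorted list is a global statement and does not single out one path of the VD-family for $H$, and a path in a general VD-family need not be a hook, so ``the dot it contributes'' is not well-defined without choosing which corner of which path to look at. None of the three escape routes you name (NW of $(x_r,y_r)$, violating $\LE$-uniqueness, forcing a vertex collision) is shown to actually arise from a $<_j$-smaller $H$. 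To close this you would essentially have to reprove the prefix-counting inequality $|H \cap \{j,\dots,m\}| \leq |J_{(x,y)} \cap \{j,\dots,m\}|$ for all $m$ by a direct combinatorial argument on VD-families, which is more work than the paper's route and is the content you have deferred to ``bookkeeping.'' I'd recommend switching to the paper's direction: start from the VD-family that represents the already-minimal $I_j$, note all its paths satisfy $p^s < j \leq p^e$, and peel off the chain dots one at a time.
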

\begin{proof}
Let $\F$ be a VD-family that represents $I_j$. Then $\F$ only contains paths that satisfy $p^s < j \leq p^e$. Because if not, then $\F \setminus \{p\}$ represents $J$ such that $J <_j I_j$. So any path $p \in \F$ has to pass through a dot in the region $\{(i,j)| i \leq x, j \geq y\}$.


Let the chain rooted at $(x,y)$ be $(i_t,j_t) \triangleleft \cdots \triangleleft (i_1,j_1)$. For each $1 \leq r \leq t$, denote the hook path at $(i_r,j_r)$ by $p_r$. Then $\F$ must contain $p_1$. If not, then $I_j \not \leq_j I(\lambda) \setminus \{i_1\} \cup \{j_1\}$ because ${p}^s \leq i_1,{p}^e \geq j_1$ for all $p \in \F$. If $p_1,\cdots,p_r \in \F$, then we also have $p_{r+1} \in \F$ because if not, we get $I_j \not \leq_j I(\lambda) \setminus \{i_1,\cdots,i_{r+1}\} \cup \{j_1,\cdots,j_{r+1}\}$ due to the fact that for any path $p \in \F \setminus \{p_1,\cdots,p_r\}$, we have ${p}^s \leq i_{r+1}$ and ${p}^e \geq j_{r+1}$. As a result, we get $\F = \{p_1,\cdots,p_t\}$ and $I_j = J_{(x,y)}$.

\end{proof}

Let's look at an example. In the $\LE$-diagram of Figure~\ref{fig:leh}, $I_4$ is given by $J_{(3,6)}$. Chain rooted at $(3,6)$ is given by $(1,10) \triangleleft (3,6) $. So $I_4 = I_1 \setminus \{1,3\} \cup \{10,6\}  = \{4,5,6,8,10\}$. $I_9$ is given by $J_{(8,9)}$. Chain rooted at $(8,9)$ is given by $(5,10) \triangleleft (8,9)$. So $I_9 = I_1 \setminus \{5,8\} \cup \{9,10\} =  \{1,3,4,9,10\}$.


\section{Proof of the main theorem}

In this section, we will prove the main theorem by showing that for each Grassmann necklace $\I = (I_1,\cdots,I_n)$, we have $\bigcap_{i=1}^{n} SM^i_{I_i} \subseteq \M_{\I}$. To do this, we need to show that each $J \in \bigcap_{i=1}^{n} SM^i_{I_i}$ can be expressed as VD-family inside the $\LE$-graph of $\M_{\I}$. In order to accomplish this, we will start from a full-$\LE$-diagram and use induction by increasing the number of empty boxes.


\begin{lemma}
Let $L$ be a full $\LE$-diagram of shape $\lambda$. Then $\M_{L} = SM_{I(\lambda)}$.
\end{lemma}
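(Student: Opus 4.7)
The plan is to prove the two inclusions of $\M_L = SM_{I(\lambda)}$ (reading $SM$ as $SM^{1}$, since there is no cyclic shift). The inclusion $\M_L \subseteq SM_{I(\lambda)}$ is immediate from Lemma~\ref{lem:P}: the first element $I_1$ of the Grassmann necklace of $\M_L$ is the $\leq_1$-minimum base of $\M_L$, and for any $\LE$-diagram one has $I_1 = I(\lambda)$, so every $J \in \M_L$ satisfies $I(\lambda) \leq_1 J$.

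For the reverse inclusion, I would take $J$ with $I(\lambda) \leq_1 J$ and build a VD-family in the $\LE$-graph that represents $J$. Write $C = I(\lambda) \setminus J = \{c_1 < \cdots < c_m\}$ and $D = J \setminus I(\lambda) = \{d_1 < \cdots < d_m\}$. A short check from the boundary-path labeling shows that in a full $\LE$-diagram, a box $(a,b)$ with $a \in I(\lambda)$ and $b \notin I(\lambda)$ lies in the shape if and only if $a < b$. Consequently, whenever $a < b$ with $a \in I(\lambda)$ and $b \notin I(\lambda)$, the \emph{hook path} going left along row $a$ from the source to column $b$ and then down column $b$ to the sink is a valid path in the $\LE$-graph.

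I will match $C$ to $D$ by the nested (stack) pairing: scan $[n]$ in increasing order, pushing each $c \in C$ onto a stack and popping to pair with each $d \in D$ encountered. The componentwise inequality $I(\lambda) \leq_1 J$ is equivalent to $|C \cap [1,t]| \geq |D \cap [1,t]|$ for all $t$, and applied at $t = d \in D$ (using $d \notin I(\lambda)$) it yields $|C \cap [1,d-1]| > |D \cap [1,d-1]|$, so the stack never underflows. By construction each paired $(c,d)$ satisfies $c < d$, so each hook path exists.

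The main step, and where I expect the real work to lie, is verifying that these hook paths are pairwise vertex-disjoint. The stack pairing is laminar: any two pairs $(c,d)$ and $(c',d')$ have intervals $[c,d]$ and $[c',d']$ either disjoint or strictly nested. In the nested case $c < c' < d' < d$, the only candidate common boxes are $(c, d')$ and $(c', d)$, and each is ruled out directly (for instance, $(c, d')$ cannot sit on the column part of the inner hook because $c < c'$, and $(c', d)$ cannot sit on the row part of the inner hook because $d > d'$). In the disjoint case $c < d < c' < d'$, the shape condition $c' > d$ forces $(c', d) \notin \lambda$, eliminating the potential overlap. Once vertex-disjointness is established, the family represents $I(\lambda) \setminus C \cup D = J$, so $J \in \M_L$ by the VD-family characterization of bases.
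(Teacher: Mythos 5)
Your proposal is correct and follows the same strategy as the paper: establish the non-crossing (laminar) bijection between $I(\lambda) \setminus J$ and $J \setminus I(\lambda)$, take the corresponding hook paths, and check they form a VD-family representing $J$. The paper asserts the existence of the non-crossing bijection and the vertex-disjointness without elaboration; you supply the stack-pairing construction, the prefix-count justification that it is well-defined under $I(\lambda)\leq_1 J$, and the explicit case analysis for disjointness, so your write-up is a fleshed-out version of the same argument.
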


\begin{proof}
We need to show that for all $J \in SM_{I(\lambda)}$, we have $J \in \M_{L}$. Due to the definition of $\leq_1$, there is a unique bijection $\phi : I(\lambda) \setminus J \rightarrow J \setminus I(\lambda)$ such that for any $a,b \in I(\lambda) \setminus J$, the two intervals $[a,\phi(a)]$ and $[b,\phi(b)]$ do not cross, meaning that they are either disjoint or nested. For each $a \in I(\lambda) \setminus J$, we associate a hook path at $(a,\phi(a))$. Then we get a VD-family representing $J$.
\end{proof}

 Given any $\LE$-diagram $L_{\I}$ with associated Grassmann necklace $\I = (I_1,\cdots,I_n)$, we want to add a dot, to obtain a new $\LE$-diagram $L_{\I'}$ such that for some $\alpha \in [n]$, we have $|I_\alpha \setminus {I_\alpha}'|=1$ and ${I_i}' = I_i$ whenever $i \not = \alpha$.

The \newword{boundary strip} of $L_{\I}$ is the set of boxes where it shares at least one vertex with the boundary path. Let's first assume that there exists an empty box in the boundary strip of $L_{\I}$. Consider an empty box in the strip such that there is no empty box to its right or bottom. Then adding a dot to this box will change exactly one element of the Grassmann necklace. So we only need to consider the case when all the boxes of the boundary strip are filled. We define the \newword{middle path} of $L_\I$ to be a lattice path inside the diagram such that:
\begin{enumerate}
\item all boxes between the middle path and the boundary path are filled with dots,
\item the corner boxes of the upper region is empty. \newword{Upper region} is the diagram obtained by looking at the boxes above or left of the middle path. A box is a \newword{corner box} of a diagram if there is are no boxes to its right and below.
\end{enumerate}
 Then putting a dot into any corner box of the upper region will work, since the newly added dot will affect only one element of the Grassmann necklace. Example of a middle path is given as a red line in Figure~\ref{fig:leh}.
 
\begin{proposition} 
Given any Grassmann necklace $\I=(I_1, \cdots, I_n)$, we have $\M_{\I}=\bigcap_{i=1}^{n} SM^i_{I_i}.$
\end{proposition}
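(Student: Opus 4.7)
The inclusion $\M_{\I}\subseteq\bigcap_i SM^i_{I_i}$ follows immediately from Theorem~\ref{thm:P}: any representative matrix for a point in $S^{tnn}_{\M_\I}$ lies in each cyclically shifted Schubert cell $\Omega^i_{I_i}$, so every base of $\M_\I$ is $\geq_i I_i$. For the reverse inclusion I plan to induct on the number $e$ of empty boxes in the $\LE$-diagram $L_\I$ of $\M_\I$. The base case $e=0$ is immediate: the preceding lemma gives $\M_\I = SM_{I(\lambda)} = SM^1_{I(\lambda)}$, and since $I_1 = I(\lambda)$ for a full diagram, $\bigcap_i SM^i_{I_i}\subseteq SM^1_{I_1} = \M_\I$.

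For the inductive step, I would invoke the boundary-strip / middle-path selection procedure already set up just above the proposition to pick an empty box $(x,y)$ of $L_\I$ whose filling produces a $\LE$-diagram $L_{\I'}$ whose necklace $\I'$ differs from $\I$ in a single coordinate $\alpha$, with $|I_\alpha\setminus I'_\alpha|=1$. Since adding a dot enlarges the positroid, the $\leq_\alpha$-minimum base can only decrease, so $I'_\alpha\leq_\alpha I_\alpha$ and hence $SM^\alpha_{I_\alpha}\subseteq SM^\alpha_{I'_\alpha}$. Combined with $SM^i_{I_i}=SM^i_{I'_i}$ for $i\neq\alpha$ this yields $\bigcap_i SM^i_{I_i}\subseteq\bigcap_i SM^i_{I'_i}$, which by the inductive hypothesis equals $\M_{\I'}$. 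So any $J\in\bigcap_i SM^i_{I_i}$ is represented by some VD-family $\F'$ in the $\LE$-graph of $L_{\I'}$.

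The crux of the argument, and the step I expect to be the real obstacle, is converting $\F'$ into a VD-family in the $\LE$-graph of $L_\I$, which differs from that of $L_{\I'}$ by the removal of the dot at $(x,y)$. If no path of $\F'$ uses $(x,y)$ as a NW-corner then $\F'$ works unchanged. Otherwise I plan to exploit the constraint $J\geq_\alpha I_\alpha$ via the previous proposition's description of $I_\alpha$ as $J_{(x,y)}$, built from the chain $(i_t,j_t)\triangleleft\cdots\triangleleft(i_1,j_1)$ rooted at $(x,y)$ together with its canonical hook paths $p_1,\ldots,p_t$. The proof of that proposition already shows that $J\geq_\alpha I_\alpha$ pins down the source/sink data of any VD-family representing $J$ near $(x,y)$, so I would reroute the unique path of $\F'$ through $(x,y)$ along the $p_r$'s, trading the use of $(x,y)$ for a detour along the chain.

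Making this rerouting rigorous in both selection cases is the main technical hurdle: the boundary-strip case (where $(x,y)$ has only filled boxes to its right and below, making it locally isolated) should be essentially direct, while the upper-region case (where $(x,y)$ is a corner of the region above the middle path) will need one to argue, using the fully-filled strip between the middle and boundary paths, that enough room is available for the detour. The $\LE$-property will be decisive, since it governs the arrangement of dots in and around the NW-region of $(x,y)$ and thereby controls which alternative paths are simultaneously vertex-disjoint from the remaining paths of $\F'$.
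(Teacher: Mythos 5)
Your setup matches the paper's exactly: induction on the number of empty boxes, the full-diagram lemma as base case, the boundary-strip/middle-path selection to add one dot so that only $I_\alpha$ changes, and the reduction via $SM^\alpha_{I_\alpha}\subseteq SM^\alpha_{I'_\alpha}$ and the inductive hypothesis to the single claim that no $J\in\M_{\I'}\setminus\M_\I$ satisfies $J\geq_\alpha I_\alpha$. Where you diverge is in how you propose to close that last step, and that is where the proposal breaks down.

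Your plan is to take the VD-family $\F'$ representing $J$ in $L_{\I'}$ and reroute the one path through the new dot along the chain $(i_t,j_t)\triangleleft\cdots\triangleleft(i_1,j_1)$ rooted there. This cannot work as a single-path surgery. Paths in a $\LE$-graph only move down and left, and the chain dots lie strictly to the northwest of the new dot; a path through the new dot cannot be deformed to pass through any of them without first moving up or right. What you could do is slide the turn elsewhere, but any such slide moves the path's source or sink, so the resulting family represents a different basis, not $J$. Worse, $J\geq_\alpha I_\alpha$ is a global condition on the multiset of endpoints of the whole family, so there is no reason the single path through the new dot is the one carrying the obstruction. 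The paper's proof never constructs a rerouted family. Instead it argues by contradiction about the structure of \emph{every} VD-family for a bad $J$: the presence of the new dot as an NW-corner forces a nested sequence of paths $p_q,\dots,p_1$ through $(w_q,z_q),\dots,(w_1,z_1)$, then either one immediately reads off $J\not\geq_\alpha I_\alpha$ from $p_q$'s endpoints, or one is forced to find further paths $p_{q+1},\dots,p_{q+t}$ through the chain $(x_1,y_1),\dots,(x_t,y_t)$; the endpoints of all these paths are a subset of $J$ whose combinatorics is incompatible with $J\geq_\alpha I_\alpha$. That "forcing" argument — no perturbation of a path is available without a collision, so more paths must already be present — is the genuine content, and it is missing from your outline. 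You have correctly identified where the difficulty sits, but the rerouting heuristic you offer to resolve it does not survive scrutiny, and you would need to replace it with the endpoint-constraint argument (or something of comparable strength).
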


\begin{proof}
We will prove the proposition by induction on $m$, the number of empty boxes inside the $\LE$-diagram $L_{\I}$ of $\M_{\I}$. When $m=0$, this is the full $\LE$-diagram case. So assume for the sake of induction that we know the result for $\LE$-diagrams having $<m$ empty boxes. 

Use the construction above to obtain $L_{\I'}$, where $\I'=({I_1}',\cdots,{I_n}')$ and there exists $\alpha \in [n]$ such that ${I_i}' = I_i$ for all $i \not = \alpha$ and $|I_\alpha \setminus {I_\alpha}'|=1$. Induction hypothesis tells us that $\M_{\I'} =  \bigcap_{i=1}^{n} SM^i_{{I_i}'}$. It is enough to show $\M_{\I'} \setminus \M_{\I} \subset {SM^{\alpha}_{I_\alpha'}} \setminus {SM^{\alpha}_{I_\alpha}}$. 

Let $(w_{q+r},z_{q+r}),\cdots \triangleleft (w_q,z_q) \triangleleft \cdots \triangleleft (w_{1}, z_{1})$ be the chain representing $I_{\alpha}'$ in $L_{\I'}$, such that $(w_q,z_q)$ is the newly added dot going from $L_{\I}$ to $L_{\I'}$. We have $(w_a,z_b) \in filled(L_{\I'})$ for $1 \leq a,b \leq q$. Any VD-family $\F_J$ representing some $J \in \M_{\I'} \setminus \M_{\I}$ should contain a path in which $(w_q,z_q)$ is a NW-corner.

In $\F_J$, denote the path going through $(w_q,z_q)$ by $p_q$. If there is no path in $\F_J$ that passes $(w_{q-1},z_{q-1})$, we can perturb the path $p_q$ to go through the points $(w_q,z_{q-1})$, $(w_{q-1},z_{q-1})$, $(w_{q-1},z_q)$ instead of going through $(w_q,z_q)$. So there must be a path $p_{q-1} \in \F_J$ that passes $(w_{q-1},z_{q-1})$. Since $(w_q,z_q)$ is a NW-corner of $p_q$, $(w_{q-1},z_{q-1})$ is also a NW-corner of $p_{q-1}$. Repeating this argument, we get $p_q,\cdots,p_1 \in \F_J$ each having  $(w_q,z_q),\cdots,(w_1,z_1)$ as a NW-corner.

Let $(x_t,y_t) \triangleleft \cdots \triangleleft (x_1,y_1)$ be the chain rooted at $(w_q,z_q)$ in $L_{\I}$. Then 
$$(x_t,y_t) \triangleleft \cdots \triangleleft (x_1,y_1) \triangleleft (w_{q-1},z_{q-1}) \triangleleft \cdots \triangleleft (w_{1}, z_{1})$$ represents $I_j$ in $\L_{\I}$. We have $t \geq r$ due to the $\LE$-property. We want to show that $J \not \geq_{\alpha} I_\alpha$.


If $p_q^e <_\alpha y_1$ or $p_q^s >_\alpha x_1$, then we have $J \not \geq_\alpha I_\alpha$ and we are done. So let's assume $p_q^e \geq_\alpha y_1$ and $p_q^s \leq_\alpha x_1$. If there is no path going through $(x_1,y_1)$ in $\F_J$, the path $p_q$ can be slightly changed so it goes through $(x_1,y_1)$ and this path cannot have $(w_q,z_q)$ as its NW-corner. So there must be a path $p_{q+1}$ in $\F_J$ that passes through $(x_1,y_1)$. 

Due to similar reasons, we only need to consider the case when $p_{q+1}^e \geq_\alpha y_2$ and $p_{q+1}^s \leq_\alpha x_2$. If there is no path going through $(x_2,y_2)$ in $\F_J$, the path $p_{q+1}$ can be slightly changed so it goes through $(x_2,y_2)$. This path cannot pass $(x_1,y_1)$, since we have $x_2 <_\alpha x_1$ and $y_2 >_\alpha y_1$. So there must be a path $p_{q+2} \in \F_J$ that passes through $(x_2,y_2)$. Repeating this argument, we get $p_{q+1},\cdots,p_{q+t} \in \F_J$. Then $\{p_{q+t}^e,\cdots,p_1^e\} \subset J$ tells us that $J \not \geq_{\alpha} I_{\alpha}$. (The reason we do this separately from the previous paragraph is because one of $y_1= z_q$ and $x_1 = w_q$ might be true.)


So we have shown $\M_{\I'} \setminus \M_{\I} \subset {SM}^{\alpha}_{I_\alpha'} \setminus {SM^{\alpha}_{I_\alpha}}$, and we are finished.

\end{proof}

Let's look at an example on using the main theorem. Let $\M$ be a positroid such that its Grassmann necklace is given by:
$$I_1 = \{1,2,4\},I_2 = \{2,4,5\},I_3 = \{3,4,5\},I_4 = \{4,5,2\},I_5 = \{5,1,2\}.$$
Our main theorem tells us that:
$$\M = \{H | H \geq_1 I_1, H\geq_2 I_2, \cdots, H\geq_5 I_5 \}$$
$$=\{ \{1,2,4\} , \{1,2,5\}, \{1,3,4\}, \{1,3,5\}, \{2,4,5\}, \{3,4,5\} \}.$$




\section{Decorated permutations and the Upper Grassmann necklace}

In this section we will show that a positroid is also an intersection of cyclically shifted dual Schubert matroids.

\begin{definition}[\cite{P}, Definition 13.3]
 A decorated permutation $\pi^{:} = (\pi, col)$ is a permutation $\pi \in S_n$ together with a coloring function $col$ from the set of fixed points $\{i | \pi(i) = i\}$ to $\{1,-1\}$. That is, a decorated permutation is a permutation with fixed points colored in two colors.
\end{definition}

It is easy to see the bijection between necklaces and decorated permutations. To go from a Grassmann necklace $\I$ to a decorated permutation $\pi^{:}=(\pi,col)$,

\begin{itemize}
 \item if $I_{i+1} = (I_i \backslash \{i\}) \cup \{j\}$, $j \not = i$, then $\pi(i)=j$,
 \item if $I_{i+1} = I_i$ and $i \not \in I_i$ then $\pi(i)=i, col(i)=1$,
 \item if $I_{i+1} = I_i$ and $i \in I_i$ then $\pi(i)=i, col(i)=-1$.
\end{itemize}

To go from a decorated permutation $\pi^{:}=(\pi,col)$ to a Grassmann necklace $\I$,
$$I_i = \{ j \in [n] | j<_i \pi^{-1}(j) \textbf{ or } (\pi(j)=j \textbf{ and } col(j)=-1) \}.$$

Let's look at an example. For decorated permutation $\pi^{:}$ with $\pi = 81425736$ and $col(5)=1$, we get $I_1 = \{1,2,3,6\},I_2 = \{2,3,6,8\},I_3 = \{3,6,8,1\},I_4 = \{4,6,8,1\},I_5 = \{6,8,1,2\}, I_6 = \{6,8,1,2\}, I_7 = \{7,8,1,2\}, I_8 = \{8,1,2,3\}$.

\begin{definition}
For $I=(i_1, \cdots, i_k) \in {[n] \choose k}$, the \newword{cyclically shifted dual Schubert matroid} $\tilde{SM}^{i}_I$ consists of bases $H=(j_1, \cdots, j_k)$ such that $I \geq_i H$.
\end{definition}

Fix a decorated permutation $\pi^{:}=(\pi,col)$. Let $\I_{\pi^{:}} = (I_1,\cdots,I_n)$ be the corresponding Grassmann necklace and $\M_{\pi^{:}}$ the corresponding positroid. 

\begin{lemma}
\label{lem:maxM}
For any $H \in \M_{\pi^{:}}$, we have $H \leq_i \pi^{-1} (I_i)$ for all $i \in [n]$.
\end{lemma}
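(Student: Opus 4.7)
The plan is to invoke Theorem~\ref{thm:mainO} to convert the hypothesis $H \in \M_{\pi^{:}}$ into the combinatorial inequalities $H \geq_j I_j$ for every $j \in [n]$, and then derive the desired upper bound $H \leq_i \pi^{-1}(I_i)$ by contradiction. By the cyclic symmetry of positroids -- a cyclic shift of $\M_{\pi^{:}}$ is again a positroid, with the decorated permutation conjugated and the Grassmann necklace cyclically reindexed -- it is enough to treat the case $i = 1$, where $<_1$ is the usual linear order.

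Suppose for contradiction that $H \not\leq_1 \pi^{-1}(I_1)$. Write $H = \{h_1 < \cdots < h_k\}$ and $B := \pi^{-1}(I_1) = \{b_1 < \cdots < b_k\}$, and let $r$ be the smallest index with $h_r > b_r$. Then $b_r < n$; the elements $h_r, \ldots, h_k$ all lie in $[b_r+1, n]$, while $h_1, \ldots, h_{r-1}$ lie in $[1, b_r]$ by minimality of $r$. Setting $j := b_r + 1 \in [n]$, this gives $|H \cap [j, n]| = k - r + 1$. If one also establishes $|I_j \cap [j, n]| = k - r$, then in $<_j$-order (where $[j, n]$ precedes $[1, j-1]$) the $(k-r+1)$-th entry of $I_j$ lies in $[1, j-1]$ while the corresponding entry of $H$ lies in $[j, n]$, giving $h_{k-r+1} <_j i_{k-r+1}$ and contradicting $H \geq_j I_j$.

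The main obstacle, and the crux of the argument, is the identity $|I_j \cap [1, j-1]| = r$. I would prove this via a bijection built from the explicit formulas $B = \{a \in [n] : \pi(a) < a\} \cup \{a : \pi(a) = a,\ col(a) = -1\}$ and $I_j = \{m \in [n] : m <_j \pi^{-1}(m)\} \cup \{m : \pi(m) = m,\ col(m) = -1\}$. The fixed points of color $-1$ in $[1, b_r]$ contribute equally to both sets. For non-fixed points, the map $a \mapsto \pi(a)$ sends $B \cap [1, b_r]$ bijectively onto $I_j \cap [1, j-1]$: if $a \leq b_r$ with $\pi(a) < a$, then $m := \pi(a) \leq b_r = j-1$ and $\pi^{-1}(m) = a \in (m, j-1]$, so (since within $[1, j-1]$ the $<_j$-order coincides with the usual one) $m <_j \pi^{-1}(m)$, placing $m \in I_j \cap [1, j-1]$; the reverse direction is symmetric. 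This yields $|I_j \cap [1, j-1]| = |B \cap [1, b_r]| = r$, completing the contradiction.
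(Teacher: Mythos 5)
Your proof is correct and follows essentially the same strategy as the paper's: reduce to $i=1$, assume a violation $h_r > b_r$ with $B=\pi^{-1}(I_1)=\{b_1<\cdots<b_k\}$, pick a cut index $j$, and contradict $H\geq_j I_j$ by comparing cardinalities of $H$ and $I_j$ on the two arcs determined by $j$. The differences are cosmetic: the paper takes the \emph{largest} violating index and cuts at $h_j$, showing $\{i_1,\ldots,i_j\}\subset I_{h_j}\cap[1,h_j)$ to get the lower bound $|I_{h_j}\cap[1,h_j)|\geq j$, whereas you take the \emph{smallest} violating index, cut at $j=b_r+1$, and compute $|I_j\cap[1,j-1]|=r$ exactly via the bijection $a\mapsto\pi(a)$ on non-fixed points (together with fixed points of color $-1$); your bijection also lets you handle fixed points directly rather than excluding them as loops/coloops as the paper does. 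One small remark: you only need the easy inclusion $\M_{\pi^:}\subseteq\bigcap SM^j_{I_j}$ (equivalently, that each $I_j$ is the $\leq_j$-minimal basis, Lemma~\ref{lem:P}), not the full strength of Theorem~\ref{thm:mainO}.
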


\begin{proof}
We may assume that $\pi$ has no fixed point since they correspond to loops or coloops of $\M_{\pi^{:}}$. We will prove only for $i=1$ since the proof is similar in other cases. Denote $I_1 = \{i_1,\cdots,i_k\}$ where $i_1,\cdots,i_k$ are labeled in a way that satisfies $\pi^{-1}(i_1) < \cdots < \pi^{-1}(i_k)$.


Denote elements of $H$ by $h_1 < \cdots < h_k$. Let $j$ be the biggest element of $[k]$ such that: 
\begin{enumerate}
\item $h_t \leq \pi^{-1}(i_t)$ for all $t \in (j,k]$ and
\item $h_j > \pi^{-1}(i_j)$.
\end{enumerate}
Since $h_i \in (\pi^{-1}(i_j), \pi^{-1}(i_{j+1})]$, we have $\{i_1,\cdots,i_j\} \subset I_{h_j}$. We get $|H \cap [1,h_j)| < |I_{h_j} \cap [1,h_j)|$, but this contradicts $H \geq_{h_j} I_{h_j}$. Hence there cannot be a $j \in [k]$ such that $h_j > \pi^{-1}(h_j)$. This tells us that $H \leq \{\pi^{-1}(j_1),\cdots,\pi^{-1}(j_k) \}$.


\end{proof}

The collection $(J_1:=\pi^{-1}(I_1),\cdots,J_n:=\pi^{-1}(I_n))$ forms a necklace in the sense that $J_{i+1} = J_i \setminus \{\pi^{-1}(i)\} \cup \{i\}$ except for $i$ such that $\pi(i)=i$. We will call this the \newword{upper Grassmann necklace} of $\pi$. 

To go from a decorated permutation $\pi^{:}=(\pi,col)$ to an upper Grassmann necklace $\J$,
$$J_r = \{ i \in [n] | \pi(i) <_r i \textbf{ or } (\pi(i)=i \textbf{ and } col(i)=-1) \}.$$

Define $\tilde{\M}_{\pi^{:}}$ as:
$$ \tilde{\M}_{\pi^{:}} = \bigcap_{i=1}^{n} \tilde{SM}_{J_i}^{i}.$$

Then Lemma~\ref{lem:maxM} tells us that $\M_{\pi^{:}} \subseteq \tilde{\M}_{\pi^{:}}$.
The proof of the following lemma is similar to Lemma~\ref{lem:maxM}.

\begin{lemma}
For any $H \in \tilde{\M}_{\pi^{:}}$, we have $H \geq_i \pi(J_i)=I_i$ for all $i \in [n]$.
\end{lemma}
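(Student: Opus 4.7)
The plan is to dualize the proof of Lemma~\ref{lem:maxM}. As there, I may assume $\pi$ has no fixed points, and by cyclic symmetry it suffices to treat $i=1$. I sort $I_1=\{i_1<\cdots<i_k\}$ in the usual order, set $j_t:=\pi^{-1}(i_t)\in J_1$, and sort $H=\{h_1<\cdots<h_k\}$; the goal is to show $i_t\leq h_t$ for every $t$. The key observation, dual to the one in Lemma~\ref{lem:maxM}, is that for each $i_t\in I_1$ the defining condition $I_1=\{i\mid i<_1\pi^{-1}(i)\}$ forces $i_t<j_t$ in the usual order; this inequality is what will select the cyclic shift $r$ used for the contradiction.

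Assume for contradiction that $H\not\geq_1 I_1$ and let $j$ be the smallest index with $h_j<i_j$. I set $r:=h_j+1$ (which lies in $[n]$ since $h_j<i_j\leq n$). The heart of the argument is the inclusion
\[
\{j_j,j_{j+1},\ldots,j_k\}\ \subseteq\ J_r\cap[h_j+1,n],
\]
playing the role of $\{i_1,\ldots,i_j\}\subseteq I_{h_j}$ in Lemma~\ref{lem:maxM}. Indeed, for $t\geq j$ one has $j_t>i_t\geq i_j>h_j$, placing $j_t$ in the head $[h_j+1,n]$ of $<_r$; and since both $i_t=\pi(j_t)$ and $j_t$ lie in this head, where $<_r$ coincides with the usual order and $i_t<j_t$, the defining condition $\pi(j_t)<_r j_t$ holds, giving $j_t\in J_r$.

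Granted the inclusion, $|J_r\cap[h_j+1,n]|\geq k-j+1$, so the first $k-j+1$ elements of $J_r$ in $<_r$-order all lie in the head $[h_j+1,n]$. On the other hand, the elements of $H$ in $<_r$-order read $h_{j+1},\ldots,h_k,h_1,\ldots,h_j$, so its $(k-j+1)$-st element is $h_1\in[1,h_j]$, sitting in the tail. Since the head precedes the tail in $<_r$, the $(k-j+1)$-st element of $H$ is strictly greater (in $<_r$) than the $(k-j+1)$-st element of $J_r$, contradicting the hypothesis $H\leq_r J_r$.

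The main obstacle I anticipate is establishing the dual inclusion $\{j_j,\ldots,j_k\}\subseteq J_r\cap[h_j+1,n]$ — in particular the combinatorial check that $<_r$ agrees with the usual order on the head, which is what lets the inequality $i_t<j_t$ translate into $\pi(j_t)<_r j_t$. Once that is done, the prefix-counting contradiction proceeds exactly as in Lemma~\ref{lem:maxM}, and the remaining cases $i\neq 1$ follow by cyclic relabeling.
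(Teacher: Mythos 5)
Your proof is correct, and it is exactly the argument the paper has in mind: the paper omits the proof of this lemma with the one-line remark that it ``is similar to Lemma~\ref{lem:maxM},'' and your dualization (sorting $I_1$ in the usual order, taking the smallest violating index $j$, setting $r=h_j+1$, showing $\{j_j,\ldots,j_k\}\subseteq J_r\cap[r,n]$, and deriving a contradiction from prefix-counting under $<_r$) is a faithful mirror image of the paper's proof of that lemma, which takes the largest violating index and counts a tail in $I_{h_j}$. The reduction to fixed-point-free $\pi$ is also legitimate here, since a coloop $i$ is forced into $H$ by $H\leq_i J_i$ and a loop $i$ is forced out by $H\leq_{i+1} J_{i+1}$, paralleling the paper's remark for Lemma~\ref{lem:maxM}.
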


So we obtain the following result:

\begin{theorem}
\label{thm:duality}
Pick a decorated permutation $\pi^{:}=(\pi,col)$. Let $\I=(I_1,\cdots,I_n)$ and $\J=(J_1,\cdots,J_n)$ be the corresponding Grassmann necklace and the upper Grassmann necklace. Then $J_i = \pi^{-1}(I_i)$ for all $i \in [n]$. We also have the equality:
$$ \bigcap_{i=1}^{n} SM^i_{I_i} = \bigcap_{i=1}^{n} \tilde{SM}_{J_i}^{i}.$$
\end{theorem}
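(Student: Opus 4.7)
The plan is to split the theorem into its two assertions and reduce each to what has already been established. The first assertion, $J_i = \pi^{-1}(I_i)$, is an elementary manipulation of the two formulas that recover the (lower and upper) Grassmann necklaces from $\pi^{:}$. The second assertion, the equality of intersections, will follow immediately by sandwiching $\M_{\pi^{:}}$ and $\tilde{\M}_{\pi^{:}}$ via the main theorem (Theorem~\ref{thm:mainO}), Lemma~\ref{lem:maxM}, and the ``dual'' lemma stated just before Theorem~\ref{thm:duality}.

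For the first step, I would unpack the defining conditions. We have $j \in I_i$ precisely when $j <_i \pi^{-1}(j)$ or $j$ is a fixed point colored $-1$. Substituting $j = \pi(k)$, this reads: $\pi(k) \in I_i$ iff $\pi(k) <_i k$, or $k = \pi(k)$ with $col(k) = -1$. Applying $\pi^{-1}$ to both sides of membership, $k \in \pi^{-1}(I_i)$ iff $\pi(k) <_i k$ or $(\pi(k)=k$ and $col(k)=-1)$, which is exactly the formula given for $J_i$. So $\pi^{-1}(I_i) = J_i$ on the nose.

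For the second step, Theorem~\ref{thm:mainO} gives $\bigcap_{i} SM^i_{I_i} = \M_{\pi^{:}}$. Lemma~\ref{lem:maxM} then says that for every $H \in \M_{\pi^{:}}$ and every $i \in [n]$, we have $H \leq_i \pi^{-1}(I_i) = J_i$, which is the defining condition for $H \in \tilde{SM}^i_{J_i}$; intersecting over $i$ yields $\M_{\pi^{:}} \subseteq \tilde{\M}_{\pi^{:}}$. The companion lemma preceding the theorem gives the reverse inclusion: for $H \in \tilde{\M}_{\pi^{:}}$, we get $H \geq_i \pi(J_i) = I_i$ for all $i$, so $H \in \bigcap_i SM^i_{I_i} = \M_{\pi^{:}}$. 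Combining the two inclusions produces the desired equality of intersections.

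There is no real obstacle in this argument, since both ingredients have already been proved; the only mild care needed is to make sure the fixed-point colorings do not cause a mismatch in the bijective verification $J_i = \pi^{-1}(I_i)$. The colored fixed points correspond to loops and coloops of $\M_{\pi^{:}}$ (as noted in the proof of Lemma~\ref{lem:maxM}), so they sit in every basis (for $col = -1$) or none (for $col = 1$) and therefore contribute trivially to both sides simultaneously, which makes their handling routine.
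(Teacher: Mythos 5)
Your proof is correct and follows essentially the same route as the paper: the paper introduces the upper Grassmann necklace precisely as $J_i := \pi^{-1}(I_i)$ (together with the explicit formula in terms of $\pi$ and $col$), and then assembles Theorem~\ref{thm:mainO}, Lemma~\ref{lem:maxM}, and its unnumbered companion lemma into the two inclusions exactly as you do. The only difference is one of presentation: the paper leaves both the verification that the two formulas for $J_i$ agree and the final ``sandwiching'' implicit, while you spell these out; the substitution $j=\pi(k)$ and the observation that $\pi(\pi(k))=\pi(k)$ iff $\pi(k)=k$ are the right details to check, and your handling of the colored fixed points is accurate.
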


\section{Lattice Path Matroids}
Lattice path matroids were defined in \cite{BMN}. These are simple cases of positroids. In this section we will show a way to get the decorated permutation of a lattice path matroid.

\begin{definition}
Pick $I,J \in {[n] \choose k}$ such that $I \leq J$. The \newword{lattice path matroid} is defined as: 
$$ LP_{I,J} := \{H| H \in {[n] \choose k}, I \leq H \leq J \} = SM_I \cap \tilde{SM}_J $$
\end{definition}

Since $I,J$ corresponds to two lattice paths in a $(n-k)$-by-$k$ grid, $LP_{I,J}$ expresses all the lattice paths between them. 

\begin{lemma}
\label{lem:pathposi}
A lattice path matroid is a positroid.
\end{lemma}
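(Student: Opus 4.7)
I would exhibit $LP_{I,J}$ as the positroid associated with an explicit decorated permutation $\pi^{:}$ built from $(I, J)$, then invoke Theorem~\ref{thm:mainO}. Write $I \setminus J = \{a_1 < \cdots < a_m\}$ and $J \setminus I = \{b_1 < \cdots < b_m\}$, and let $\pi$ be the involution whose nontrivial action is the transpositions $(a_r\, b_r)$ for $r = 1, \ldots, m$; decorate by coloring each fixed point in $I \cap J$ by $-1$ (coloops) and each fixed point in $[n] \setminus (I \cup J)$ by $+1$ (loops).

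The first substep is to check that $a_r < b_r$ for every $r$. Otherwise $b_r < a_r$ strictly, since $I \setminus J$ and $J \setminus I$ are disjoint, so among the $a_s$'s only $a_1, \ldots, a_{r-1}$ can be $\leq b_r$. Then $|I \cap [1, b_r]| \leq |I \cap J \cap [1, b_r]| + (r-1)$ while $|J \cap [1, b_r]| \geq |I \cap J \cap [1, b_r]| + r$, contradicting $I \leq J$. Granted this, the formulas of Section~5 show that the Grassmann necklace $(I_1, \ldots, I_n)$ of $\pi^{:}$ starts with $I_1 = I$, and by Theorem~\ref{thm:duality} the upper Grassmann necklace starts with $J_1 = \pi^{-1}(I) = J$.

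Set $\M := \bigcap_{i=1}^n SM^i_{I_i}$. By Theorem~\ref{thm:mainO}, $\M$ is a positroid, and by Theorem~\ref{thm:duality} it can also be written as $\bigcap_{i=1}^n \tilde{SM}^i_{J_i}$. Keeping only the $i = 1$ factor of each representation yields $\M \subseteq SM^1_{I_1} \cap \tilde{SM}^1_{J_1} = SM_I \cap \tilde{SM}_J = LP_{I,J}$, which is one direction of the desired equality $LP_{I,J} = \M$.

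The main obstacle is the reverse inclusion $LP_{I,J} \subseteq \M$. Tracing the necklace rule as $i$ runs from $1$ to $n$, one shows that $I_i$ is obtained from $I$ by exchanging $a_r$ for $b_r$ exactly for those pairs with $a_r < i \leq b_r$. To prove $I_i \leq_i H$ for every $H$ with $I \leq H \leq J$ and every $i$, I would verify the criterion $|I_i \cap \{y : y \leq_i x\}| \geq |H \cap \{y : y \leq_i x\}|$ for every $x \in [n]$, via a case analysis keyed on whether each straddling pair $(a_r, b_r)$ lies inside the cyclic down-set $\{y : y \leq_i x\}$; the required bounds follow from $|I \cap [1, y]| \geq |H \cap [1, y]| \geq |J \cap [1, y]|$ together with $a_r < b_r$. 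Once both inclusions are established, Theorem~\ref{thm:mainO} applied to the Grassmann necklace $(I_1, \ldots, I_n)$ concludes that $LP_{I,J}$ is a positroid.
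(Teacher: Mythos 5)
Your strategy---produce a decorated permutation and invoke Theorem~\ref{thm:mainO}---is reasonable in spirit, and is indeed the route the paper takes \emph{after} this lemma (the closing theorem of Section~6). But the decorated permutation you construct is not the one attached to $LP_{I,J}$, and this breaks the argument at the start. You take $\pi$ to be the involution swapping $I\setminus J$ with $J\setminus I$ order-preservingly, and you declare every fixed point in $I\cap J$ a coloop. That is false: an element of $I\cap J$ need not occupy the same \emph{position} in the sorted lists of $I$ and of $J$, and only in that case is it a coloop of $LP_{I,J}$. Concretely, take $n=5$, $k=2$, $I=\{1,2\}$, $J=\{2,5\}$. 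Then $\{1,3\}\in LP_{I,J}$ but $2\notin\{1,3\}$, so $2$ is not a coloop; yet your $\pi$ fixes $2$ with $col(2)=-1$, forcing $2$ into every $I_i$, hence into every basis of $\M=\bigcap_i SM^i_{I_i}$. So $LP_{I,J}\not\subseteq\M$, and the reverse inclusion you describe in the final paragraph cannot go through (e.g.\ $\{1,3\}\not\geq_3 I_3=\{2,5\}$). In this example the actual decorated permutation of $LP_{I,J}$ is the $5$-cycle $1\mapsto 3\mapsto 4\mapsto 5\mapsto 2\mapsto 1$, which is not an involution, so the discrepancy is structural, not just a coloring slip. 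The correct description, matching the paper, sends the $r$-th smallest element of $J$ to the $r$-th smallest element of $I$ and the $r$-th smallest element of $[n]\setminus J$ to the $r$-th smallest element of $[n]\setminus I$.

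For contrast, the paper's own proof of this lemma sidesteps decorated permutations entirely: it writes down an explicit $k\times n$ matrix (a Vandermonde matrix with zeros placed outside the intervals $[a_i,b_i]$, and variables chosen to grow fast enough) whose nonvanishing maximal minors are precisely those indexed by $LP_{I,J}$, all positive. That directly exhibits $LP_{I,J}$ as a positroid without any necklace computation, and is then used as an ingredient (together with the cited consequence of Postnikov's Theorem~17.8) to pin down the decorated permutation of $LP_{I,J}$ afterwards. Your approach, once the permutation is corrected, would essentially reprove the section's concluding theorem rather than the lemma and would still require the necklace/duality bookkeeping you sketch; the paper's matrix construction is both shorter and self-contained.
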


\begin{proof}
Pick $I=\{a_1 < \cdots < a_k\}, J=\{b_1 < \cdots < b_k\}$ such that $I \leq J$. Let's construct a $k$-by-$n$ matrix such that $\Delta_H = 0$ for all $H \in {[n] \choose k} \setminus LP_{I,J}$ and $\Delta_H >0$ for all $H \in LP_{I,J}$.

Let $V=(v_{ij})_{i,j=1,1}^{k,n}$ be a $k$-by-$n$ Vandermonde matrix. Set $v_{ij}=0$ for all $j \not \in [a_i,b_i]$. So $V$ would look like:

$$ v_{ij} = \{ \begin{array}{ll}
         {x_i}^{j-1} & \mbox{if $a_i \leq j \leq b_i,$}\\
        0 & \mbox{otherwise.} \end{array}$$ 

Assign values to variables $x_1,\cdots,x_k$ such that $x_1 >1$ and $x_{i+1} = x_i^{k^2}$ for all $i \in [k-1]$. Let's denote $V_{[1..i],[c_1,\cdots,c_i]}$ as a submatrix of $V$ by taking rows from $1$ to $i$ and columns $c_1,\cdots,c_i$.  We have $\Delta_{H}>0$ if and only if $V_{[1..k],H}$ has nonzero diagonal entries, which happens if and only if $H \in LP_{I,J}$. 


\end{proof}

Let's try to find $\pi^{:}$ that corresponds to $LP_{I,J}$. Denote $I=\{i_1 < \cdots <i_k\}$ and $J=\{j_1 < \cdots < j_k\}$. If $i_t=j_t$ for some $t \in [k]$, this is a coloop in the matroid. The permutation $\pi$ we are trying to find should satisfy:
\begin{itemize}
\item $I = \{ i \in [n] | i \leq \pi^{-1}(i) \}$, 
\item $J = \{ i \in [n] | \pi(i) \leq i) \}$ and
\item $\pi(J)=I$.
\end{itemize}

If $\pi$ satisfies the above conditions, then $\M_{\pi^{:}}$ is contained in $LP_{I,J}$. So $LP_{I,J}$ is the biggest positroid under inclusion inside the collection satisfying the above property. The following lemma is an immediate corollary of (\cite{P}, Theorem~17.8).

\begin{lemma}
If we have $a <_i b <_i \pi(a) <_i \pi(b) <_i a$, then $\M_{\pi^{:}} \subset \M_{\mu^{:}}$, where $\mu$ is obtained from $\pi$ by switching $\pi(a)$ and $\pi(b)$ (i.e. $\mu(a) = \pi(b), \mu(b) = \pi(a)$).
\end{lemma}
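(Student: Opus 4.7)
The plan is to invoke Theorem~\ref{thm:mainO} for both $\pi^{:}$ and $\mu^{:}$, which reduces the desired inclusion of positroids to a componentwise comparison of their Grassmann necklaces $\I^{\pi}=(I_1^{\pi},\ldots,I_n^{\pi})$ and $\I^{\mu}=(I_1^{\mu},\ldots,I_n^{\mu})$. Once we know how $I_t^{\pi}$ and $I_t^{\mu}$ compare under $\leq_t$ for each $t$, the inclusion drops out from the definition $SM^t_I=\{H:I\leq_t H\}$ by intersecting over $t$.

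Using the explicit formula
$$I_t^{\sigma}=\{\,j\in[n]:j<_t\sigma^{-1}(j)\,\}\cup\{\,j:\sigma(j)=j,\ col(j)=-1\,\}$$
coming from the bijection between decorated permutations and necklaces, and observing that $\mu$ is obtained from $\pi$ by the single swap $\mu(a)=y,\ \mu(b)=x$ (writing $x=\pi(a),\ y=\pi(b)$), I note that $\pi^{-1}$ and $\mu^{-1}$ agree everywhere except that $\pi^{-1}(x)=a,\ \pi^{-1}(y)=b$ becomes $\mu^{-1}(x)=b,\ \mu^{-1}(y)=a$. Consequently the symmetric difference $I_t^{\pi}\triangle I_t^{\mu}$ is contained in $\{x,y\}$ for every $t$, so at each index only two memberships have to be checked.

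I would then run a case analysis by where $t$ sits in the cyclic order $a,b,x,y$ given by the hypothesis. These four elements cut the circle into four arcs; in each arc I would check directly whether $x<_t\sigma^{-1}(x)$ and $y<_t\sigma^{-1}(y)$ hold for both $\sigma\in\{\pi,\mu\}$. I expect that in three of the four arcs the two memberships agree on both sides, forcing $I_t^{\pi}=I_t^{\mu}$, while in the remaining arc $I_t^{\mu}$ is obtained from $I_t^{\pi}$ by removing one of $\{x,y\}$ and inserting the other.

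The hard part will be that one differing arc: to pin down the direction of $\leq_t$-comparability between $I_t^{\pi}$ and $I_t^{\mu}$ one must locate the position of the exchanged element within the sorted $k$-tuples and track how the remaining entries shift relative to it, and the boundary subcases where $t$ coincides with one of $a,b,x,y$ need individual sanity checks. A cleaner alternative, should the bookkeeping get messy, is to use Theorem~\ref{thm:duality} and perform the same analysis on the upper Grassmann necklaces $J_t=\sigma^{-1}(I_t)$: there the swap acts on the two inputs $a$ and $b$ rather than on the two values $x$ and $y$, so the change is more localized and symmetric, and the final $\leq_t$-comparison reduces to a direct exchange argument on positions rather than on values.
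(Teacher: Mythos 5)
The paper offers no proof of this lemma beyond a citation of Postnikov's Theorem 17.8, so your plan of deriving it from Theorem~\ref{thm:mainO} and the explicit necklace--permutation formula is a genuinely self-contained alternative route, and the setup is right: the swap alters $\sigma^{-1}$ only at $x:=\pi(a)$ and $y:=\pi(b)$, so $I_t^{\pi}$ and $I_t^{\mu}$ can differ only in the memberships of $x$ and $y$, and one should sort by which of the four arcs cut out by $a,b,x,y$ contains $t$.

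However, you defer exactly the step that decides the direction, and when it is carried out the comparison comes out the reverse of the stated inclusion. For $t$ in the cyclic arc from $a$ to $b$ (exclusive of $a$, inclusive of $b$), the $<_t$-order on the four points is $b <_t x <_t y <_t a$. Since $\pi^{-1}(x)=a$, $\pi^{-1}(y)=b$, $\mu^{-1}(x)=b$, $\mu^{-1}(y)=a$, one gets $x\in I_t^{\pi}$, $y\notin I_t^{\pi}$, $y\in I_t^{\mu}$, $x\notin I_t^{\mu}$, so $I_t^{\mu}=(I_t^{\pi}\setminus\{x\})\cup\{y\}$ with $x<_t y$, hence $I_t^{\mu}\geq_t I_t^{\pi}$; on the other three arcs one checks $I_t^{\pi}=I_t^{\mu}$. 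By Theorem~\ref{thm:mainO} this gives $\M_{\mu^{:}}\subsetneq\M_{\pi^{:}}$, not $\M_{\pi^{:}}\subset\M_{\mu^{:}}$. A concrete counterexample to the statement as printed: $n=4$, $k=2$, $\pi=3412$, $a=1$, $b=2$ (so $\pi(a)=3$, $\pi(b)=4$ sit in the required cyclic pattern), $\mu=4312$; here $\M_{\pi^{:}}={[4]\choose 2}$ while $\M_{\mu^{:}}$ omits $\{2,3\}$. So the lemma as written appears to have a typo: the hypothesis describes a crossing of the two arcs, and resolving a crossing shrinks the positroid. Either the inclusion should be reversed, or the hypothesis should read $a<_i b<_i\pi(b)<_i\pi(a)<_i a$ (an alignment); it is the alignment version that the subsequent application to $LP_{I,J}$ actually needs, since one passes from an arbitrary admissible $\pi'$ to the order-preserving permutation by swaps that turn aligned arcs into crossing ones. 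Rather than attempting to force the bookkeeping to confirm the stated direction, you should flag the discrepancy.
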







Combining this with Lemma~\ref{lem:pathposi}, we get the following result:

\begin{theorem}
Choose any $I=\{i_1 < \cdots < i_k\}$ and $J=\{j_1 < \cdots < j_k\} \in \ {[n] \choose k}$ such that $I \leq J$. Denote $[n] \setminus J = \{d_1 < \cdots <d_{n-k}\}$ and $[n] \setminus I = \{c_1 < \cdots< c_{n-k}\}$. Then $LP_{I,J}$ is a positroid and its decorated permutation $\pi^{:}=(\pi,col)$ is given by:
$$\mbox{$\pi(j_r) = i_r$ for all $r \in [k]$},$$
$$\mbox{$\pi(d_r) = c_r$ for all $r \in [n-k]$},$$
$$ \mbox{If $\pi(t)=t$ then $col(t)$} = \{ \begin{array}{ll}
        -1 & \mbox{if $t \in J$,}\\
        1 & \mbox{otherwise.} \end{array} $$ 
        
\end{theorem}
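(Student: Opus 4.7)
The plan is to recognize the $\pi^{:}$ exhibited in the statement as the decorated permutation of the positroid $LP_{I,J}$, whose existence is provided by Lemma~\ref{lem:pathposi}.

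First I would verify that the $\pi$ displayed satisfies the three constraints on the decorated permutation of $LP_{I,J}$ listed in the paragraph preceding the theorem. The assumption $I\leq J$ yields, by the standard dominance reformulation of $\leq$, the companion inequality $d_r\leq c_r$ for every $r\in[n-k]$. Combined with $i_r\leq j_r$, this shows $\pi(j_r)=i_r\leq j_r$ and $\pi(d_r)=c_r\geq d_r$, so that both $J=\{t\mid\pi(t)\leq t\}$ and $I=\{t\mid t\leq\pi^{-1}(t)\}$ hold (with the coloop convention for fixed points), while $\pi(J)=I$ is immediate from the definitions. The prescribed coloring matches the loop/coloop structure of $LP_{I,J}$: a fixed point $t$ of $\pi$ lies in $J$ exactly when $t=i_r=j_r$ for some $r$, i.e., precisely when $t\in I\cap J$ is a coloop, and the analogous statement for loops holds for fixed points outside $J$.

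Because every decorated permutation satisfying the three conditions has its positroid contained in $LP_{I,J}$ (as observed in the text just before the theorem), we obtain $\M_{\pi^{:}}\subseteq LP_{I,J}$. For the opposite inclusion I would use that $LP_{I,J}$ is itself a positroid by Lemma~\ref{lem:pathposi}, so it has a unique decorated permutation $\sigma^{:}$ which also obeys those three conditions, together with the switching lemma stated immediately before the theorem. The switching lemma shows that within the class of decorated permutations satisfying the three conditions, the one whose positroid is maximal must be a terminal element with no available switching quadruple. I would then check that the order-preserving pairing $j_r\mapsto i_r$, $d_r\mapsto c_r$ described in the theorem is precisely such a terminal element. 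Since $\sigma^{:}$ realizes the maximum $LP_{I,J}$ in this class, we conclude $\sigma^{:}=\pi^{:}$ and hence $\M_{\pi^{:}}=LP_{I,J}$.

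The main obstacle is this last combinatorial step: showing that every decorated permutation $\mu^{:}\neq\pi^{:}$ satisfying the three conditions admits a switching move whose output still satisfies those same conditions (in particular still carries $J$ to $I$) and strictly enlarges its positroid. This reduces to locating, in any such $\mu$ differing from the order-preserving pairing, an inversion either in the bijection $\mu\vert_J\colon J\to I$ or in $\mu\vert_{[n]\setminus J}\colon[n]\setminus J\to[n]\setminus I$ relative to the order-preserving one, and then verifying that this inversion provides a cyclic quadruple $a<_i b<_i \mu(a)<_i\mu(b)$ to which the switching lemma can be applied at some basepoint $i$; one must further check that the switching (which swaps within $J$ or within $[n]\setminus J$) preserves the third condition $\mu(J)=I$.
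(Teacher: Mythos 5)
Your argument mirrors the paper's approach exactly: check that the exhibited $\pi$ satisfies the three constraints listed just before the theorem (giving $\M_{\pi^{:}}\subseteq LP_{I,J}$), invoke Lemma~\ref{lem:pathposi} to know $LP_{I,J}$ is the maximal positroid in that class, and use the switching lemma to identify the order-preserving pairing as the terminal, hence maximal, element. The combinatorial step you flag as the remaining obstacle — that any non-terminal $\mu$ satisfying the three conditions admits a switch that preserves them and strictly enlarges the positroid — is likewise left implicit in the paper, whose entire proof is the sentence ``Combining this with Lemma~\ref{lem:pathposi}, we get the following result.''
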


\section{Further Remark}






Positroids correspond to the matroid strata of the nonnegative part of the Grassmannian. Flag matroids correspond to the matroid strata of a flag variety. 

\begin{definition}
A \newword{flag} $F$ is a strictly increasing sequence
$$ F^1 \subset F^2 \subset \cdots \subset F^m $$
of finite sets. Denote by $k_i$ the cardinality of the set $F^i$. We write $F=(F^1,\cdots,F^m)$. The set $F^i$ is called the $i$-th constituent of $F$.

\end{definition}

\begin{theorem}[\cite{BGW}]
A collection $\F$ of flags of rank $(k_1,\cdots,k_m)$ is a \newword{flag matroid} if and only if:
\begin{enumerate}
\item For all $i \in [m]$, $M_i$ the collection of $F^i$'s for each $F \in \F$ form a matroid.
\item For every $w \in S_n$, the $\leq_w$-minimal bases of each $M_i$ form a flag. If this holds, we say that $M_i$'s are concordant.
\item Every flag
$$B_1 \subset \cdots \subset B_m$$
such that $B_i$ is a basis of $M_i$ for $i=1,\cdots,m$ belongs to $\F$.
\end{enumerate}
\end{theorem}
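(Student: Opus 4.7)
The plan is to prove both directions by reducing to the Gelfand--Serganova Maximality Property: $\F$ is a flag matroid if and only if for every $w \in S_n$, there is a unique $\leq_w$-minimal flag in $\F$. Each of the three numbered conditions will then correspond to a piece of this property restricted either to an individual constituent $M_i$ or to the entire flag.

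For the forward direction, assume $\F$ is a flag matroid and, given $w \in S_n$, let $F^w = (F^{w,1}, \ldots, F^{w,m})$ be its unique $\leq_w$-minimal flag. I would argue that $F^{w,i}$ is the unique $\leq_w$-minimum of $M_i$: for any $G = F^i$ with $F \in \F$, the componentwise order on flags gives $G \geq_w F^{w,i}$, and two flags $F, F' \in \F$ tying in the $i$-th coordinate would contradict the uniqueness of $F^w$ after an exchange in some other coordinate. This yields condition (1) via the Maximality characterization of matroids, and condition (2) is immediate since $F^{w,1} \subset \cdots \subset F^{w,m}$ forms a flag by construction. Condition (3) is the deep one: it says $\F$ is saturated, containing every compatible flag of bases. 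The approach is to invoke the Borovik--Gelfand--White structural fact that consecutive constituents of a flag matroid satisfy a quotient (strong-map) relation, so that any compatible chain $B_1 \subset \cdots \subset B_m$ with $B_i \in M_i$ can be reached from a known element of $\F$ by a sequence of constituent-wise basis exchanges that preserve membership in $\F$.

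For the backward direction, assume (1), (2), (3) and fix $w \in S_n$. By (1) each $M_i$ admits a unique $\leq_w$-minimal basis $B^w_i$; by (2) the $B^w_i$ assemble into a flag; by (3) this flag lies in $\F$. Any other $F \in \F$ satisfies $F^i \geq_w B^w_i$ for every $i$, so the assembled flag is $\leq_w$-minimal in $\F$, and uniqueness is immediate from the uniqueness of each $B^w_i$. Hence $\F$ satisfies the Maximality Property and is a flag matroid.

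The principal obstacle is condition (3) in the forward direction. Conditions (1), (2), and the entire backward direction unwind almost directly from the Maximality Property, but showing that a flag matroid must contain every compatible basis chain of its constituents is where the genuine combinatorics of flag matroids enters --- via strong maps between consecutive $M_i$'s, or equivalently via the realization of flag matroid polytopes as Minkowski sums of compatible matroid polytopes. Once this structural ingredient is available, (3) follows by verifying that each single-coordinate basis exchange between two flags lands back in $\F$, which is a consequence of the Coxeter matroid exchange axiom.
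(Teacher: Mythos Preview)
The paper does not prove this theorem at all: it is quoted verbatim from \cite{BGW} and stated without proof, serving only as background for the definition of a flag positroid in Section~7. There is therefore no ``paper's own proof'' to compare your proposal against.

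As a standalone sketch, your backward direction is essentially complete and correct: conditions (1)--(3) assemble, for each $w\in S_n$, a unique $\leq_w$-minimal flag in $\F$, which is exactly the Maximality Property. In the forward direction, your argument for (1) is slightly muddled --- the remark about ``two flags tying in the $i$-th coordinate'' is unnecessary, since the Maximality Property already gives $F^i \geq_w F^{w,i}$ componentwise for every $F\in\F$, so $F^{w,i}$ is the $\leq_w$-minimum of $M_i$ outright, and (1) follows. Condition (2) is immediate as you say.

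The genuine gap, which you correctly flag, is (3) in the forward direction. Your plan is to invoke the strong-map/quotient relation between consecutive constituents and then perform single-coordinate exchanges, but this is circular as written: the fact that consecutive $M_i$'s are related by a strong map is itself a nontrivial consequence of the flag matroid axioms, proved in \cite{BGW} alongside (and partly via) the very characterization you are trying to establish. You have identified the right ingredient but not supplied an independent argument for it, so the proposal is a correct outline of where the difficulty lies rather than a proof. Since the paper itself defers entirely to \cite{BGW} here, that is an appropriate place to stop.
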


\begin{definition}
A \newword{flag positroid} is a flag matroid in which all constituents are positroids.
\end{definition}

It would be interesting to check what is the necessary condition for two decorated permutations, so that the corresponding positroids are concordant.


\begin{thebibliography}{HLSS}

\bibitem[BGW]{BGW} A.~Borovik, I.~Gelfand, N.~White. \textit{Coxeter Matroids} Birkhauser, Boston, 2003.

\textit{Oriented Matroids}. Encyclopedia of Mathematics and Its Applications,
vol. 46. Cambridge University Press, Cambridge, 1993.

\bibitem[BMN]{BMN} J.~Bonin, A.~de Mier, M.~Noy. Lattice path matroids: enumerative aspects and Tutte polynomials,\textit{ J. Combin. Theory Ser. A} 104 (2003) 63-94.

\bibitem[F]{F} W.~Fulton. \textit{Young Tableaux. With Applications to Representation Theory and Geometry} New York: Cambridge University Press, 1997


\bibitem[L]{L} G.~Lusztig, Introduction to total positivity, in \textit{Positivity in Lie theory: open problems, ed.}
J. Hilgert, J.D. Lawson, K.H. Neeb, E.B. Vinberg, de Gruyter Berlin, 1998, 133-145

\bibitem[P]{P} A.~Postnikov: Total positivity, Grassmannians, and networks, 
arXiv: math/ 0609764v1 [math.CO].

\bibitem[T]{Talaska} K.~Talaska: Combinatorial formulas for Le-coordinates on a totally nonnegative Grassmannian, 
to appear in \textit{J. Combin. Theory Ser. A}.

\bibitem[R]{R} K.~Rietsch, Total positivity and real flag varieties, Ph.D. Dissertation, MIT, 1998.


\end{thebibliography}
\end{document}